\theoremstyle{plain} 
\newtheorem{theorem}             {Theorem} 
\newtheorem{lemma}      [theorem]{Lemma}
\newtheorem{corollary}  [theorem]{Corollary}
\theoremstyle{definition}
\theoremstyle{remark}
\newtheorem{remark}              {Remark}
\renewcommand{\geq}{\geqslant}
\renewcommand{\leq}{\leqslant}
\def\diag{\operatorname{diag}}
\def\trace{\operatorname{trace}}
\def\trace{\operatorname{trace}}
\def\End{\operatorname{End}}
\def\End{\operatorname{End}}
\def\vol{\operatorname{vol}}
\def\GL{\operatorname{GL}}
\def\SL{\operatorname{SL}}
\def\trace{\operatorname{trace}}
\def\End{\operatorname{End}}
\def\End{\operatorname{End}}
\def\vol{\operatorname{vol}}
\def\GL{\operatorname{GL}}
\def\SL{\operatorname{SL}}
\begin{document}

\author{Paul D. Nelson}
\address{ETH Zurich, Department of Mathematics, R{\"a}mistrasse 101, CH-8092, Zurich, Switzerland}
\email{paul.nelson@math.ethz.ch}
\subjclass[2010]{Primary 11F03; Secondary 22E50, 11F72}

\title{Analytic isolation of newforms of given level}

\maketitle

\begin{abstract}
  We describe a method for understanding averages over newforms
  on $\Gamma_0(q)$ in terms of averages over all forms of some
  level.  
  The method is simplest when $q$ is divisible by the cubes of its prime divisors.
\end{abstract}

\section{Introduction\label{sec:intro}}
\label{sec-1}
Fix a positive integer $k$.
For each positive integer $q$,
let $\mathcal{A}(q)$ denote the space of weight $k$
holomorphic cusp forms on $\Gamma_0(q)$.  It is a
finite-dimensional inner product space.  Let
$\mathcal{A}^*(q) \leq \mathcal{A}(q)$ denote the
Atkin--Lehner \emph{newspace}
\cite{MR0268123}; it is the orthogonal complement of the
\emph{oldspace}, which is in turn the span of the forms
\[
\varphi|_d(z) := d^{k/2} \varphi(d z)
\]
taken over all proper
divisors $\ell \neq q$ of $q$, all divisors $d$ of $q/\ell$, and all
$\varphi \in \mathcal{A}(\ell)$.

The newspace  $\mathcal{A}^*(q)$ is
of
fundamental interest and importance 
because
of its strong
interaction
with the theory of Hecke operators.
In analytic
number theory, it is of particular interest to understand averages
(of Fourier coefficients, $L$-values, ...)  over the newspace.  Unfortunately, the basic tools for
studying such averages (e.g., trace formulas) apply most
directly to the larger spaces $\mathcal{A}(q)$.

There arises
the problem of relating averages over $\mathcal{A}^*(q)$ to
those over $\mathcal{A}(q)$.  Several authors\footnote{
  \label{footnote:1}see for instance \cite[\S3]{MR1369394},
  \cite[\S2]{MR1828743}, \cite[\S3]{MR2755090},
  \cite[\S5]{MR3334233}, \cite{2016arXiv160403224B}} have addressed
this problem via the Atkin--Lehner decomposition
\begin{equation}\label{eqn:atkin-lehner-decomp}
  \mathcal{A}(q) = \bigoplus_{\ell \mid q} \bigoplus_{d \mid
    \frac{q}{\ell}} \{\varphi|_d : \varphi \in
  \mathcal{A}^*(\ell) \}
  \quad \text{(\emph{non-orthogonal}
    direct sum)}
\end{equation}
followed by a computationally-involved Gram--Schmidt
orthogonalization.
In this article, we introduce an approach
which is more direct for certain values of $q$.
\begin{theorem}\label{cor:concrete}
  Suppose $q$ is divisible by the cube of every prime that divides it.
  Then for $z_1, z_2$ in the upper half-plane,
  \begin{equation}\label{eqn:newform-sum-bilinear}
    \sum_{\varphi \in \mathcal{B}^*(q)}
    \varphi(z_1) \overline{\varphi}(z_2)
    =
    \sum_{d, e \mid q}
    \mu(d) \mu(e)
    \sum_{\varphi \in \mathcal{B}(\frac{q}{d e})}
    \varphi|_d(z_1) \overline{\varphi|_d}(z_2),
  \end{equation}
  where $\mathcal{B}(\ell), \mathcal{B}^*(\ell)$ denote arbitrary
  orthonormal bases for $\mathcal{A}(\ell), \mathcal{A}^*(\ell)$
  defined using Petersson inner products with respect to normalized
  hyperbolic measures of volume independent of $\ell$,
  such as $\vol(\Gamma_0(\ell) \backslash \mathbb{H})^{-1} \, \frac{d x \, d y}{y^2}$, so that $\varphi \mapsto \varphi|_{d}$ is unitary.
\end{theorem}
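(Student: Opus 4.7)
My plan is to prove the corresponding operator identity
\[
  P^*_q \;=\; \sum_{d, e \mid q} \mu(d)\mu(e)\, \Pi_{d,e}
\]
on $\mathcal{A}(q)$, where $P^*_q$ is the orthogonal projection onto $\mathcal{A}^*(q)$ and $\Pi_{d,e}$ is the orthogonal projection onto $\iota_d \mathcal{A}(q/(de)) \subseteq \mathcal{A}(q)$, with $\iota_d\colon \varphi \mapsto \varphi|_d$ the unitary embedding. Matching reproducing kernels on both sides then yields~\eqref{eqn:newform-sum-bilinear}. The cube-divisibility of $q$ ensures the sum is well-defined: every squarefree $d, e \mid q$ automatically satisfies $de \mid q$.

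The first step is to factor both operators over the primes dividing $q$. Because $\mathcal{A}(q)$ decomposes (after strong multiplicity one) as a sum of tensor products of local components indexed by automorphic representations, the local-at-$p$ projections for distinct primes commute. The divisor sum then collapses into a product $\prod_{p \mid q} R_p$, where, writing $a = v_p(q)$ and $K_b = \pi_p^{K_0(p^b)}$,
\[
  R_p \;=\; 1 - P_{K_{a-1}} - P_{\iota_p K_{a-1}} + P_{\iota_p K_{a-2}}.
\]
Correspondingly $P^*_q = \prod_{p \mid q} P^*_p$, where $P^*_p$ is the projection onto vectors of exact local conductor $p^a$. The theorem thus reduces to the local identity $R_p = P^*_p$ on $\pi_p^{K_0(p^a)}$ for every irreducible admissible representation $\pi_p$ of $\mathrm{GL}_2(\mathbb{Q}_p)$ of conductor $p^c$, $0 \leq c \leq a$, $a \geq 3$.

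I would verify this local identity by cases on $c$. When $c = a$, both sides equal the identity on the one-dimensional newvector line, while the relevant $K_{a-1}$-fixed subspaces vanish. When $c \geq 2$---which, given $a \geq 3$, covers $c = a - 1$ as well as any $c \leq a - 2$ with $c \geq 2$---the local Whittaker model places a newvector of conductor $\geq p^2$ with support on $\mathbb{Z}_p^\times$ in the diagonal variable, so that $v, v|_p, \ldots, v|_{p^{a-c}}$ are supported on distinct valuations and hence pairwise orthogonal. On this already-orthogonal basis $R_p$ reduces to a diagonal computation that collapses to $0$. For $c = 1$ (Steinberg or its unramified twist), the Gram matrix is geometric, $\langle v|_{p^i}, v|_{p^j}\rangle \propto p^{-|i - j|}$, and the Gram--Schmidt vectors $w_i := v|_{p^i} - p^{-1} v|_{p^{i-1}}$ give an orthogonal basis of $\pi_p^{K_0(p^a)}$ in which $R_p = 0$ reads off directly.

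The main obstacle will be the unramified case $c = 0$, where the Gram matrix of the $v|_{p^i}$ is not geometric but is governed by Macdonald's formula and depends nontrivially on the Satake parameter. There I would reduce the identity to the commutativity of $P_{K_{a-1}}$ and $P_{\iota_p K_{a-1}}$ on $\pi_p^{K_0(p^a)}$---equivalently, to the statement that the projection of any $K_{a-1}$-fixed vector onto $\iota_p K_{a-1}$ lands back in $K_{a-1}$---and check this commutativity through an explicit algebraic identity in the Satake parameter and $p$. The hypothesis $a \geq 3$ should enter precisely here, providing enough room in the Toeplitz structure of the Gram matrix for the required polynomial identity to hold.
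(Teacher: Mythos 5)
Your strategy---reduce to an operator identity on $\mathcal{A}(q)$, factor over primes dividing $q$, then verify a local statement $R_p = P^*_p$ by casework on the local conductor $c$---is genuinely different from the paper's primary route, and more closely parallels the paper's ``third proof'' (\S\ref{sec:9}), which it labels the standard, explicit-formula approach. The paper instead proves a single abstract operator-calculus lemma (Lemma~\ref{lem:composition}: $e_\ell \circ e_{\ell'} = e_{\ell \cap \ell'}$ whenever $\#\,\ell\cap\ell' \geq 2$ or one segment contains the other) valid for \emph{any} smooth representation of $\GL_2(k)$, and derives Theorem~\ref{thm:main} from it by a five-line algebraic computation; the lemma itself is proved group-theoretically (a product decomposition $K_{m..n}\cdot K_{m'..n'}=K_{m'..n}$ amounting to a congruence-subgroup factorization, \S\ref{sec-7}) or probabilistically (non-backtracking walks, \S\ref{sec-8}). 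No irreducibility, no Kirillov model, no Satake parameter, no casework on $c$. Your route buys explicitness at the cost of generality and length, and---crucially---it isn't finished.

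Two concrete problems. First, and most significantly, the $c=0$ case is conceded as ``the main obstacle'' and left as a sketch. This is precisely the case your casework cannot dodge: the spherical vector has full support in the Kirillov model and the Gram matrix $\langle v|_{p^i}, v|_{p^j}\rangle$ given by Macdonald's formula is neither diagonal nor geometric. The paper's \S\ref{sec:9} resolves it via the second-order linear recurrence $a_{n+2}=b_1 a_{n+1}+b_0 a_n$ satisfied by the Hecke matrix coefficients together with the non-degeneracy input $a_1\neq\pm 1$ (true because a generic representation is infinite-dimensional); what you call a reduction to commutativity of $P_{K_{a-1}}$ and $P_{\iota_p K_{a-1}}$ is plausible but not established, and ``check an explicit algebraic identity in the Satake parameter'' is exactly the part that needs doing. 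Second, your remark that the hypothesis $a\geq 3$ ``should enter precisely'' in the $c=0$ case is not accurate: it is also essential for $c=1$. For $c=1$, $a=2$, one computes $R_p v_0 = -a_1 v_1 \neq 0$, so $R_p \neq 0$ even though $\pi$ has no newvector of conductor $p^2$. Finally, a notational point: for Steinberg the Gram entries are $a_1^{|i-j|}$ with $a_1 = \pm q^{-1/2}$ (unitary normalization), and the orthogonalizing vectors are $w_i = v|_{p^i} - a_1\, v|_{p^{i-1}}$, not $v|_{p^i} - p^{-1} v|_{p^{i-1}}$.

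One more structural caveat: your global-to-local factorization leans on decomposing $\mathcal{A}(q)$ as a direct sum of restricted tensor products of local components via strong multiplicity one. The paper's \S\ref{sec-5} deduces the classical statement by a direct local-to-classical argument that never decomposes $\mathcal{A}(q)$ into irreducibles; this matters, because the local theorem (Theorem~\ref{thm:main}) is deliberately stated for arbitrary smooth representations, and the representation used in \S\ref{sec-5} is not irreducible.
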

The formulation of Theorem \ref{cor:concrete} requires only the definition of the
newspace as introduced by Atkin--Lehner \cite{MR0268123} in
1970, but the simple identity \eqref{eqn:newform-sum-bilinear}
does not appear to have been anticipated prior to this work
despite the considerable technical effort
expended
on the problems that it addresses.
The proof is short, simple and independent of
\eqref{eqn:atkin-lehner-decomp}.
The restriction on $q$ will be discussed in due course, and is
irrelevant
for our motivating applications in which it is a large power of a fixed prime.

Theorem \ref{cor:concrete} applies to problems in which one
\emph{knows} how to average over all forms of given level, but
\emph{wants} to average only over the newforms.  For the sake of
illustration, we record here three such applications.  Some of
the consequences to follow are new, some old. Theorem
\ref{cor:concrete} itself is new.

As we explain in detail in the body of the paper (see
\S\ref{sec:basic-idea}, \S\ref{sec:main}), Theorem
\ref{cor:concrete} may be understood as an identity of hermitian
forms (equivalently, self-adjoint operators) on the space of
modular forms (compare with \cite[Lem 4.1]{MR3325425}, for
instance).  The proofs of the applications to follow may then be
understood as the result of taking the Hilbert--Schmidt pairing of
that identity against a given sesquilinear form.

We begin with an overly simple application which may
nevertheless aid orientation.  Let $n$ be a natural number
coprime to $q$.  By applying the Hecke operator $T_n$ to either
variable and integrating over the diagonal $z_1 = z_2$ of
\eqref{eqn:newform-sum-bilinear}, we obtain a relation between
traces of Hecke operators acting on the newspace and on all
forms of given level:
\begin{equation}\label{eq:traces-newforms}
  \trace(T_n | \mathcal{A}^*(q))
  = \sum_{d,e|q} \mu(d) \mu(e)
  \trace(T_n | \mathcal{A}(\tfrac{q}{d e})).
\end{equation}
Applying the Eichler--Selberg trace formula to
each summand on the RHS of
\eqref{eq:traces-newforms}
gives a \emph{trace formula for newforms},
the case $n=1$ of which is a \emph{dimension formula for newspaces}.
Such formulas are not new: they follow
(for general $q$)
from
\eqref{eqn:atkin-lehner-decomp} and M{\"o}bius inversion (see
\cite[\S5.1]{MR1396897} or \cite[\S2]{MR1604056} and
\cite{MR2141534}).
The proof indicated here
does not use \eqref{eqn:atkin-lehner-decomp}.

Second, write the Fourier expansion of $\varphi \in \mathcal{A}(q)$
as
\[\varphi(z) = \sum_{n \geq 1} n^{k/2} \rho(n;\varphi) e^{2
  \pi i n z},\]
so that $\rho(n;{\varphi|_{d}}) = 1_{d \mid
  n} \rho(\tfrac{n}{d};{\varphi})$.
Let $m,n$ be positive integers.
By taking the $m$th (resp. $n$th) Fourier coefficient in the $z_1$
(resp. $z_2$) variable
of \eqref{eqn:newform-sum-bilinear},
we obtain
an identity
\begin{equation}\label{eqn:petersson}
  \Delta^*(m,n;q)
  =
  \sum_{\substack{
      d \mid \gcd(m,n,q)  \\
      e \mid q
    }
  }
  \mu(d) \mu(e)
  \Delta(\tfrac{m}{d},\tfrac{n}{d}; {\tfrac{q}{d e}}),
\end{equation}
expressing the averages of Fourier coefficients
\[
\Delta^*(m,n;q)
:= \sum_{\varphi \in \mathcal{B}^*(q)}
\rho(m;\varphi) \overline{\rho(n;\varphi)}
\]
over newforms
in terms of the analogous averages $\Delta(m,n;q)$
over $\mathcal{B}(q)$.
Applying the classical Petersson formula  to
the RHS of \eqref{eqn:petersson} gives a \emph{Petersson formula
  for newforms}.

The special case of \eqref{eqn:petersson} in
which the variables $m,n$ satisfy the coprimality constraint
$(m n, q) = 1$ and $q$ is a prime power was established by
D. Rouymi \cite[Prop. 9, Rmk. 4]{MR2755090} in his work on
newforms of level $p^\nu, \nu \rightarrow \infty$ after some
involved calculations along the lines indicated following
\eqref{eqn:atkin-lehner-decomp} (see also \cite[\S5]{MR3334233},
\cite{2016arXiv160403224B}).  The proof given here by way of
Theorem \ref{cor:concrete}
is simpler in
that it avoids explicit orthogonalization of the decomposition
\eqref{eqn:atkin-lehner-decomp}.
The uniformity of
\eqref{eqn:petersson}
with respect to the variables
$m,n$
appears to be relevant
for applications
such as those pursued recently in \cite{2016arXiv160806854P}.

Formulas of the shape \eqref{eqn:petersson} have already seen
diverse applications.  The references in footnote
\ref{footnote:1} contain several such applications, as well as
proofs of special cases of \eqref{eqn:petersson}.  Conversely,
by Fourier inversion, one can recover Theorem \ref{cor:concrete}
from the general case of \eqref{eqn:petersson}.



Third, let $\Psi : \Gamma_0(q)
\backslash \mathbb{H} \rightarrow \mathbb{C}$
be a measurable function of moderate growth,
and let $n$ be a natural number coprime to $q$.
The Petersson inner products
$\langle \varphi, \Psi \varphi \rangle$
are of basic interest in many questions (quantum unique
ergodicity, subconvexity, ...).
A formula relating their Hecke--twisted
first moments over newforms and over all forms
of given level
follows from the proof of
\eqref{eq:traces-newforms}
by weighting the integrand
by $\Psi(z)$ in the final step:
\begin{equation}\label{eq:twisted-diag-integral}
  \sum_{\varphi \in \mathcal{B}^* (q)}
  \left\langle \varphi, \Psi \cdot T_n \varphi \right\rangle
  =
  \sum_{d,e \mid q}
  \mu(d) \mu(e)
  \sum_{\varphi \in \mathcal{B} (\tfrac{q}{d e})}
  \left\langle \varphi|_d, \Psi \cdot T_n \varphi|_d \right\rangle.
\end{equation}
The summands on the RHS of \eqref{eq:twisted-diag-integral} may be studied by integrating $\Psi$
against the holomorphic kernel for $T_n$ on conjugates of
$\Gamma_0(q)$.  By the multiplicity one theorem, the function of
$n$ given by the  LHS of \eqref{eq:twisted-diag-integral} determines the family of inner products
$\langle \varphi, \Psi \varphi \rangle$ arising as $\varphi$
traverses an orthonormal basis of Hecke newforms.
The original
motivation for the present work is that
\eqref{eq:twisted-diag-integral} and its variants constitute
the first step in a method to study the quantum variance of
newforms of large level (see
\cite[\S7.1]{nelson-variance-73-2}).
Formula \eqref{eq:twisted-diag-integral} is new in all cases.





For many problems involving modular forms, the case of
\emph{squarefree} (or even \emph{prime}) level is often the
simplest and hence the natural first case to consider.  Our
method, perhaps counterintuitively, applies most directly to
\emph{cubefull} levels.  It applies also to levels that are not
necessarily cubefull, but becomes more complicated to implement
and is not clearly superior to existing approaches.
The present generality suffices for the depth
aspect in which levels are powers of a fixed prime and hence for
our motivating applications \cite{nelson-variance-73-2}.

The cubefull levels often exhibit representative phenomena.
Because of its directness and simplicity, our method may be
useful also for problems involving non-cubefull levels as a
first step towards understanding the expected truth.  This is
analogous (in several respects) to studying the asymptotics of
smoothly weighted sums $\sum_{n} f(n) W(n/x)$ before those of
their sharply-truncated counterparts $\sum_{n \leq x} f(n)$.


To explain the basic ideas behind the proof with minimal notation/prerequisites,
we record in \S\ref{sec:basic-idea} a direct proof of a
representative special case of
Theorem \ref{cor:concrete}.
We then formulate
in \S\ref{sec:main} our main result,
which may be understood as a local
representation-theoretic form of Theorem \ref{cor:concrete} that
applies also to Maass forms, on quotients attached to quaternion
algebras, over number fields, and (with minor
modifications) in half-integral weight;
it consists of
constructing an element of the Hecke algebra of $\GL_2$ over a
non-archimedean local field that projects onto the newvectors of
given log-conductor $\geq 3$.

To elucidate
that result from as many perspectives as possible,
we then give three short proofs.
Each relies on
a novel operator calculus for idempotents in the Hecke algebra
(\S\ref{sec-6}) which we verify
\begin{enumerate}
\item group-theoretically
(\S\ref{sec-7}), 
\item by reduction to a probabilistic assertion
concerning random non-backtracking walks on the Bruhat--Tits
tree (\S\ref{sec-8}), and 
\item using the Kirillov model and
recurrence relations for Hecke eigenvalues (\S\ref{sec:9}).
\end{enumerate}

In closing, we note that it would be natural and interesting
to extend the present work to the setting of
newvectors on $\GL_N$ \cite{MR620708}.

\section{The proof in a basic but representative case\label{sec:basic-idea}}
\label{sec-2}
We now prove Theorem \ref{cor:concrete} in the prime-cubed case
$q = p^3$, which already captures the key ideas.
The general case will be deduced in \S\ref{sec-5} from
our main local results.

The required identity \eqref{eqn:newform-sum-bilinear} specializes to
\begin{equation}\label{eqn:specialized-goal} 
  \begin{split}
    \sum_{\varphi \in \mathcal{B}^*(p^3)}
    \varphi(z_1) \overline{\varphi}(z_2)
    &=
    \sum_{\varphi \in \mathcal{B}(p^3)}
    \varphi(z_1)
    \overline{\varphi}(z_2) 
    -
    \sum_{\varphi \in \mathcal{B}(p^2)}
    \varphi(z_1)
    \overline{\varphi}(z_2) \\
    &\quad -
    \sum_{\varphi \in \mathcal{B}(p^2)}
    \varphi|_p(z_1)
    \overline{\varphi|_p}(z_2) 
    +
    \sum_{\varphi \in \mathcal{B}(p)}
    \varphi|_p(z_1)
    \overline{\varphi|_p}(z_2).
  \end{split}
\end{equation}
For $i, j \in \{0,1,2,3\}$ with $i \leq j$,
let
$E_{ij} : \mathcal{A}(p^3) \rightarrow \mathcal{A}(p^3)$
denote the orthogonal projector
onto the subspace
$\mathcal{A}_{i j}
:=
\{\varphi|_{p^i} : \varphi \in \mathcal{A}(p^{j-i})\}$.
One has
$\mathcal{A}_{i j} = \{\varphi \in \mathcal{A}(p^3) :
\varphi|\gamma = \varphi \text{ for all } \gamma \in \Gamma_{i
  j}\}$
where
$\varphi|\gamma$ is the slash operator used
to define
the automorphy of $\varphi$
and $\Gamma_{i j}$ is the group
\[
\Gamma_{i j} :=
\begin{bmatrix}
  p^{-i} &  \\
  & 1
\end{bmatrix}
\Gamma_0(p^{j-i})
\begin{bmatrix}
  p^i &  \\
  & 1
\end{bmatrix}
=
\begin{bmatrix}
  \mathbb{Z}  & p^{-i} \mathbb{Z}  \\
  p^j \mathbb{Z} &  \mathbb{Z} 
\end{bmatrix} \cap \SL_2(\mathbb{Q})
\]
fitting into the lattice diagram
\[
\begin{tikzpicture}
  \node (00) at (-3,2) {$\Gamma_{0 0}$};
  \node (11) at (-1,2) {$\Gamma_{1 1}$};
  \node (22) at (1,2) {$\Gamma_{2 2}$};
  \node (33) at (3,2) {$\Gamma_{3 3}$};
  \node (01) at (-2,1) {$\Gamma_{0 1}$};
  \node (12) at (0,1) {$\Gamma_{1 2}$};
  \node (23) at (2,1) {$\Gamma_{2 3}$};
  \node (02) at (-1,0) {$\Gamma_{0 2}$};
  \node (13) at (1,0) {$\Gamma_{1 3}$};
  \node (03) at (0,-1) {$\Gamma_{0 3}$};
  \draw (03) -- (02) -- (01) -- (00)
  (03) -- (13) -- (23) -- (33)
  (02) -- (12) -- (11)
  (01) -- (11)
  (12) -- (22)
  (13) -- (12)
  (23) -- (22);
\end{tikzpicture}
\]
with the smallest group $\Gamma_{0 3} = \Gamma_0(p^3)$ at the
bottom, the largest groups (all conjugates of $\Gamma_{0 0} = \SL_2(\mathbb{Z})$) along the top,
the chain $\Gamma_{0 j} = \Gamma_0(p^j)$ along the left edge,
and with $\Gamma_{i j} \cap \Gamma_{j k} = \Gamma_{i k}$
for $0 \leq i \leq j \leq k \leq 3$.
The projector $E_{i j}$ may be expressed concretely as the averaging operator
\[
E_{i j} \varphi
=
\frac{1}{|\Gamma_{03} \backslash \Gamma_{ij}|}
\sum_{\gamma \in \Gamma_{03}
  \backslash \Gamma_{ij}}
\varphi|\gamma.
\]
Because
$\{\varphi|_{p^i} : \varphi \in \mathcal{B}(p^{j-i})\}$
extends to an orthonormal basis
of $\mathcal{A}(p^3)$,
one has
\[\sum_{\varphi \in \mathcal{B}(p^{j-i})}
\varphi|_{p^i}(z_1) \overline{\varphi|_{p^i}}(z_2)= 
\sum_{\varphi \in \mathcal{B}(p^3)}
E_{ij} \varphi(z_1) \overline{\varphi}(z_2),\]
so our specialized goal \eqref{eqn:specialized-goal}
may be rewritten as
\[
\begin{split}
  \sum_{\varphi \in \mathcal{B}^*(p^3)}
  \varphi(z_1) \overline{\varphi}(z_2)
  &=
  \sum_{\varphi \in \mathcal{B}(p^3)}
  E_{0 3} \varphi(z_1)
  \overline{\varphi}(z_2) 
  -
  \sum_{\varphi \in \mathcal{B}(p^3)}
  E_{0 2} \varphi(z_1)
  \overline{\varphi}(z_2)  \\
  &\quad
  -
  \sum_{\varphi \in \mathcal{B}(p^3)}
  E_{1 3} \varphi(z_1)
  \overline{\varphi}(z_2) 
  +
  \sum_{\varphi \in \mathcal{B}(p^3)}
  E_{1 2} \varphi(z_1)
  \overline{\varphi}(z_2),
\end{split}
\]
or equivalently,
with the definition
$E_{03}^* := E_{03} - E_{02} - E_{13} + E_{12}$,
as
\[
\sum_{\varphi \in \mathcal{B}^*(p^3)}
\varphi(z_1) \overline{\varphi}(z_2)
=
\sum_{\varphi \in \mathcal{B}(p^3)}
E_{03}^*
\varphi(z_1)
\overline{\varphi}(z_2).
\]
In other words,
we must show the following:
\begin{lemma}\label{prop:level-p-cubed-projector}
  $E_{03}^*$ defines the orthogonal projector onto
  the newspace $\mathcal{A}^*(p^3)$.
\end{lemma}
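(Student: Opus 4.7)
The plan is to factor
\[
E_{03}^{*} = (I - E_{02})(I - E_{13}),
\]
which displays $E_{03}^{*}$ as a product of two commuting self-adjoint idempotents and hence as the orthogonal projector onto $\mathcal{A}_{02}^{\perp} \cap \mathcal{A}_{13}^{\perp}$. A preliminary observation identifies the oldspace at level $p^3$ with $\mathcal{A}_{02} + \mathcal{A}_{13}$: every $\mathcal{A}_{ij}$ with $j-i < 3$ is contained in one of these two summands, since the corresponding $\Gamma_{ij}$ lies above either $\Gamma_{02}$ or $\Gamma_{13}$ in the lattice diagram. Consequently $\mathcal{A}^{*}(p^3) = \mathcal{A}_{02}^{\perp} \cap \mathcal{A}_{13}^{\perp}$, so the factored form above would complete the argument.

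The factorization reduces to establishing the central identity
\[
E_{02}\,E_{13} = E_{13}\,E_{02} = E_{12}.
\]
Using the averaging expression $E_{ij} \varphi = [\Gamma_{ij}:\Gamma_{03}]^{-1} \sum_{\gamma \in \Gamma_{03}\backslash\Gamma_{ij}} \varphi|\gamma$, the composition $E_{02} E_{13}$ is a double average over pairs $(\alpha,\beta) \in (\Gamma_{03}\backslash\Gamma_{02}) \times (\Gamma_{03}\backslash\Gamma_{13})$ weighted by $\varphi|(\beta\alpha)$. To identify this with $E_{12}$ I would establish two group-theoretic facts. First, $\Gamma_{02} \cap \Gamma_{13} = \Gamma_{03}$: this follows from the given lattice relation $\Gamma_{ij}\cap\Gamma_{jk} = \Gamma_{ik}$ via $\Gamma_{02} \cap \Gamma_{13} \subseteq \Gamma_{02} \cap \Gamma_{23} = \Gamma_{03}$ (using the inclusion $\Gamma_{13} \subseteq \Gamma_{23}$ visible in the diagram) together with $\Gamma_{03} \subseteq \Gamma_{02}\cap\Gamma_{13}$. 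Second, $\Gamma_{13}\,\Gamma_{02} = \Gamma_{12}$ as a set. Together these ensure that the product map $(\beta,\alpha)\mapsto\beta\alpha$ is a bijection $(\Gamma_{03}\backslash\Gamma_{13})\times(\Gamma_{03}\backslash\Gamma_{02}) \to \Gamma_{03}\backslash\Gamma_{12}$, yielding $E_{02}E_{13} = E_{12}$; then self-adjointness of the three operators forces $E_{13}E_{02} = (E_{02}E_{13})^{*} = E_{12}^{*} = E_{12}$.

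The main obstacle is the set-theoretic decomposition $\Gamma_{13}\Gamma_{02} = \Gamma_{12}$, which I would prove by an index count. Direct computation gives $[\Gamma_{02}:\Gamma_{03}] = [\Gamma_{13}:\Gamma_{03}] = p$ (the latter via conjugation by $\diag(p,1)$, which transports the inclusion $\Gamma_{13}\supseteq\Gamma_{03}$ onto an index-$p$ inclusion inside $\Gamma_0(p^2)$) and $[\Gamma_{12}:\Gamma_{03}] = p^2$. The standard set-theoretic identity $|AB| = |A|\,|B|/|A\cap B|$, applied modulo the common subgroup $\Gamma_{03}$ and combined with the equality $\Gamma_{02}\cap\Gamma_{13}=\Gamma_{03}$, gives $[\Gamma_{13}\Gamma_{02}:\Gamma_{03}] = p\cdot p = p^2$; together with the evident containment $\Gamma_{13}\Gamma_{02} \subseteq \Gamma_{12}$ and the matching total index, equality follows. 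Given these ingredients, the identity $E_{03}^{*} = I - E_{02} - E_{13} + E_{12} = (I - E_{02})(I - E_{13})$ is formal, and as a product of commuting self-adjoint projections it is itself the projection onto $\ker E_{02} \cap \ker E_{13} = (\mathcal{A}_{02}+\mathcal{A}_{13})^{\perp} = \mathcal{A}^{*}(p^3)$, proving the lemma.
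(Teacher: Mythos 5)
Your proposal is correct, and its computational heart is the same as the paper's: everything reduces to the identity $E_{13}E_{02}=E_{02}E_{13}=E_{12}$, which in turn reduces to the group-theoretic decomposition $\Gamma_{13}\Gamma_{02}=\Gamma_{12}$ together with $\Gamma_{02}\cap\Gamma_{13}=\Gamma_{03}$. But you package the argument differently in two places. First, you notice that $E_{03}^* = (I-E_{02})(I-E_{13})$, a product of commuting self-adjoint idempotents, and pair this with the observation that the oldspace is exactly $\mathcal{A}_{02}+\mathcal{A}_{13}$; this dispatches both the ``identity on newspace'' and ``annihilates oldspace'' checks in one stroke, which is slightly slicker than the paper's route of verifying $E_{03}^*\circ E_{02}=E_{03}^*\circ E_{13}=0$ separately. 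Second, and more substantively, you prove $\Gamma_{13}\Gamma_{02}=\Gamma_{12}$ by an index count ($[\Gamma_{13}:\Gamma_{03}]\cdot[\Gamma_{02}:\Gamma_{03}]=p\cdot p=p^2=[\Gamma_{12}:\Gamma_{03}]$, combined with the evident containment), whereas the paper gives an explicit unipotent factorization $\gamma=\gamma_1\gamma_2$. Both are valid, but the explicit factorization is what the paper reuses verbatim for the local result (Lemma~\ref{lem:composition}, proved in \S\ref{sec-7}), so it is arguably the more portable choice; your index count would need to be redone, though it would also work locally. One small imprecision worth fixing: the ``product map $(\beta,\alpha)\mapsto\beta\alpha$ is a bijection $(\Gamma_{03}\backslash\Gamma_{13})\times(\Gamma_{03}\backslash\Gamma_{02})\to\Gamma_{03}\backslash\Gamma_{12}$'' is not a canonically well-defined map of coset spaces, since $\Gamma_{03}$ is not normal in $\Gamma_{13}$; the statement should be phrased as: for any choice of right-coset representatives $\{\beta_i\}$ and $\{\alpha_j\}$, the products $\beta_i\alpha_j$ form a complete set of representatives for $\Gamma_{03}\backslash\Gamma_{12}$. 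This is what the two group-theoretic facts actually deliver, and it suffices for the double-average computation.
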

\begin{remark}
  The conclusion of Lemma \ref{prop:level-p-cubed-projector}
  is not altogether formal.
  For instance, it fails if one replaces ``$3$'' by ``$2$.''
\end{remark}
The operators $E_{02}, E_{13}$ and $E_{12}$ are self-adjoint
idempotents with image in the oldspace
and hence kernel containing the newspace,
while
$E_{03}$ is the identity,
so
$E_{03}^*$ restricts to the identity on newspace.
To conclude that $E_{03}^*$
orthogonally projects
onto the newspace,
it remains only to verify that it
annihilates the oldspace;
as the latter is spanned by the images
of $E_{02}$ and $E_{13}$,
it suffices to show that
$E_{03}^* \circ E_{02} = 0$ and
$E_{03}^* \circ E_{13} = 0$.
We verify here the first of these identities, the proof of the
second being similar.
We claim that
\begin{equation}\label{eq:comp-1}
  E_{03} \circ E_{02} = E_{02},
  \quad
  E_{02} \circ E_{02} = E_{02}, \quad     E_{12} \circ E_{02} = E_{12},
\end{equation}
and
\begin{equation}\label{eq:comp-3}
  E_{13} \circ E_{02} = E_{12}
\end{equation}
from which
it follows that
$E_{03}^* \circ E_{02}
= E_{02} - E_{02} - E_{12} + E_{12} = 0$,
as required.
The identities
\eqref{eq:comp-1}
are consequences of the transitivity of orthogonal
projection
onto nested subspaces.
The interesting identity is thus \eqref{eq:comp-3},
which we may write thanks to
the third identity in \eqref{eq:comp-1}
in the equivalent form
$E_{1 3} \circ E_{0 2} = E_{1 2} \circ E_{0 2}$ and then in
terms
of averaging operators
as the assertion that
for all $\varphi \in \mathcal{A}_{0 2}$,
\[
\frac{1}{|\Gamma_{0 3} \backslash \Gamma_{1 3}|}
\sum_{\gamma \in \Gamma_{0 3} \backslash \Gamma_{1 3}}
\varphi | \gamma
=
\frac{1}{|\Gamma_{0 2} \backslash \Gamma_{1 2}|}
\sum_{\gamma \in \Gamma_{0 2} \backslash \Gamma_{1 2}}
\varphi | \gamma.
\]
To that end, it suffices to verify that the natural
map of coset spaces
$\Gamma_{0 3} \backslash \Gamma_{1 3} \rightarrow \Gamma_{0 2}
\backslash \Gamma_{ 1 2}$
induced by the inclusions $\Gamma_{1 3} \leq \Gamma_{1 2},
\Gamma_{0 3} \leq \Gamma_{0 2}$ is bijective.
The injectivity follows from the evident identity
$\Gamma_{1 3} \cap \Gamma_{0 2} = \Gamma_{0 3}$,
while the surjectivity, which is the crux of the whole matter,
is
given as follows:
\begin{lemma}\label{lem:basic-surjective}
  The map $\Gamma_{0 2} \times \Gamma_{1 3} \ni (\gamma_1,\gamma_2) \mapsto \gamma_1 \gamma_2 \in \Gamma_{1 2}$ is surjective.
\end{lemma}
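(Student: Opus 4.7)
The plan is to prove surjectivity by exhibiting, for each $\gamma \in \Gamma_{12}$, an explicit $\gamma_2 \in \Gamma_{13}$ such that $\gamma_1 := \gamma \gamma_2^{-1}$ lies in $\Gamma_{02}$. Unwinding the matrix description of the groups, I write
\[
\gamma = \begin{pmatrix} a & b \\ c & d \end{pmatrix}, \quad a, d \in \mathbb{Z}, \ b \in p^{-1}\mathbb{Z}, \ c \in p^2 \mathbb{Z}, \ ad - bc = 1.
\]

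The key move is to look for $\gamma_2$ in the simplest available form, namely the unipotent upper-triangular
\[
\gamma_2 = \begin{pmatrix} 1 & \beta \\ 0 & 1 \end{pmatrix}, \qquad \beta \in p^{-1} \mathbb{Z},
\]
which automatically has determinant one and lies in $\Gamma_{13}$. A direct multiplication gives
\[
\gamma_1 = \gamma \gamma_2^{-1} = \begin{pmatrix} a & b - a\beta \\ c & d - c\beta \end{pmatrix}.
\]
Three of the four $\Gamma_{02}$-conditions are then satisfied regardless of $\beta$: the $(1,1)$ and $(2,1)$ entries are still $a \in \mathbb{Z}$ and $c \in p^2 \mathbb{Z}$, while the $(2,2)$ entry satisfies $d - c\beta \in \mathbb{Z}$ because $c \in p^2\mathbb{Z}$ and $\beta \in p^{-1}\mathbb{Z}$ force $c\beta \in p\mathbb{Z} \subset \mathbb{Z}$.

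The only nontrivial condition is integrality of the $(1,2)$ entry $b - a\beta$. Writing $b = b'/p$ and $\beta = m/p$ with $b', m \in \mathbb{Z}$, this becomes the linear congruence $a m \equiv b' \pmod{p}$. The cube-level hypothesis enters exactly here: the combination $c \in p^2 \mathbb{Z}$ and $b \in p^{-1}\mathbb{Z}$ gives $bc \in p\mathbb{Z}$, so $ad - bc = 1$ forces $ad \equiv 1 \pmod{p}$ and in particular $a \in (\mathbb{Z}/p\mathbb{Z})^\times$. Choosing $m \equiv a^{-1} b' \pmod{p}$ solves the congruence and produces the desired $\gamma_2$.

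The only real obstacle is selecting the right ansatz for $\gamma_2$; with the unipotent form chosen above, the argument reduces to routine $p$-adic bookkeeping and the single observation that the determinant constraint forces $a$ to be a unit mod $p$. I would write out the proof in exactly this order — ansatz, verification of the three automatic entries, reduction to one congruence, invertibility of $a$ — which together should fit in a handful of lines.
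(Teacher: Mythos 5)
Your proof is correct, and it takes a genuinely different route from the paper's. The paper first conjugates by $\mathrm{diag}(p,1)$ and reduces modulo $p^3$, invoking the surjectivity of $\SL_2(\mathbb{Z}) \to \SL_2(\mathbb{Z}/p^3)$ so that it may work over the ring $\mathfrak{o} = \mathbb{Z}/p^3$, where it produces an LU-type factorization $\gamma = \gamma_1\gamma_2$ with $\gamma_1$ lower unipotent (with entry $c/a$) and $\gamma_2$ upper triangular; the inversion of $a$ in $\mathfrak{o}$ is what makes this clean. You instead work directly in $\SL_2(\mathbb{Q})$, make $\gamma_2$ the unipotent factor (upper triangular, with parameter $\beta \in p^{-1}\mathbb{Z}$), and observe that three of the four integrality constraints on $\gamma_1 = \gamma\gamma_2^{-1}$ hold automatically, reducing the whole problem to a single linear congruence $am \equiv b' \pmod{p}$ that is solvable because $\det\gamma = 1$ together with $bc \in p\mathbb{Z}$ forces $a \in (\mathbb{Z}/p)^\times$. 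Your version is slightly more elementary in that it avoids the strong-approximation step and requires invertibility of $a$ only modulo $p$ rather than modulo $p^3$; the paper's version, by contrast, puts the computation in a finite ring where all the bookkeeping is automatic once one writes down the LU factorization. Both are short and both isolate the same ultimate mechanism, namely that the determinant condition plus $c \in p^2\mathbb{Z}$ makes $a$ a $p$-adic unit.
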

\begin{proof}
  We must show that every $\gamma \in \SL_2(\mathbb{Q})$ satisfying
  \[
  \gamma \in
  \begin{bmatrix}
    \mathbb{Z}  & p^{-1} \mathbb{Z}  \\
    p^2 \mathbb{Z}  & \mathbb{Z} 
  \end{bmatrix}
  \]
  arises as the product $\gamma = \gamma_1 \gamma_2$
  of two $\gamma_1, \gamma_2 \in \SL_2(\mathbb{Q})$
  satisfying
  \[
  \gamma_1
  \in
  \begin{bmatrix}
    \mathbb{Z}  & \mathbb{Z}  \\
    p^2 \mathbb{Z}  & \mathbb{Z} 
  \end{bmatrix},
  \quad 
  \gamma_2 \in
  \begin{bmatrix}
    \mathbb{Z}  & p^{-1} \mathbb{Z}  \\
    p^3 \mathbb{Z}  & \mathbb{Z} 
  \end{bmatrix}.
  \]
  To simplify calculations, we conjugate by
  $\left(
    \begin{smallmatrix}
      p&\\
      &1
    \end{smallmatrix}
  \right)$ and reduce modulo $p^3$.
  Set $\mathfrak{o} := \mathbb{Z}/p^3$,
  $\mathfrak{p} := p \mathbb{Z} / p^3 < \mathfrak{o}$.
  By the surjectivity of the natural map $\SL_2(\mathbb{Z})
  \rightarrow \SL_2(\mathfrak{o})$,
  we reduce to verifying
  that every
  \[
  \gamma 
  = \begin{pmatrix}
    a & b \\
    c & d
  \end{pmatrix}
  \in
  \begin{bmatrix}
    \mathfrak{o}  & \mathfrak{o}  \\
    \mathfrak{p}   & \mathfrak{o} 
  \end{bmatrix} \cap \SL_2(\mathfrak{o})
  \]
  arises as the product $\gamma = \gamma_1 \gamma_2$
  of some
  \[
  \gamma_1
  \in
  \begin{bmatrix}
    \mathfrak{o}   & \mathfrak{p}   \\
    \mathfrak{p}   & \mathfrak{o} 
  \end{bmatrix} \cap \SL_2(\mathfrak{o}),
  \quad 
  \gamma_2 \in
  \begin{bmatrix}
    \mathfrak{o}   & \mathfrak{o}  \\
    \mathfrak{p}^2  & \mathfrak{o}
  \end{bmatrix} \cap \SL_2(\mathfrak{o}).
  \]
  To that end,
  we note that $\det(\gamma) = 1, c \in \mathfrak{p}$ implies $a \in
  \mathfrak{o}^\times$
  and take
  \[
  \gamma_1 :=
  \begin{pmatrix}
    1 & 0 \\
    c/a & 1
  \end{pmatrix},
  \quad
  \gamma_2 :=
  \begin{pmatrix}
    a & b \\
    0 & d - b c/a
  \end{pmatrix}.
  \]
  This completes the proof of Lemma
  \ref{lem:basic-surjective},
  hence of Lemma \ref{prop:level-p-cubed-projector},
  hence of the $q = p^3$ case of Theorem \ref{cor:concrete}.
\end{proof}

\begin{remark}
  The ``standard'' approach to proving something like Theorem
  \ref{cor:concrete} (see e.g. \cite{MR2755090} or 
  \S\ref{sec:9}) would be to check it one Hecke-irreducible
  subspace at a time using explicit formulas
  derived from Atkin--Lehner theory.  The present
  observation is that it is in some cases more efficient
  to work with the congruence subgroups themselves.
\end{remark}
\section{Statement of the main local result\label{sec:main}}
\label{sec-3}
We now formulate the main result of this article,
which as indicated earlier may be understood as a local,
flexibly applicable
form of Theorem \ref{cor:concrete}
(see \cite[\S7.1]{nelson-variance-73-2}
for an application).

Let $k$ be a non-archimedean local field with ring of integers
$\mathfrak{o}$, maximal ideal $\mathfrak{p}$,
uniformizer $\varpi$, and $q := \#\mathfrak{o}/\mathfrak{p}$.
Let $G$ be a closed subgroup of $\GL_2(k)$ that contains $\SL_2(k)$. Equip $G$ with some Haar measure $d g$.
By a \emph{segment}, we shall mean a nonempty
finite consecutive set of integers,
denoted $m..n := \{m, m+1, \dotsc, n\}$ for some
integers $m,n$ with $m \leq n$.
The cardinality of a segment $\ell$ is
denoted $\# \ell$, thus $\# m..n := |m-n| + 1$.
For each segment $\ell = m..n$,
denote by
\[
R_{\ell} :=
\begin{bmatrix}
  \mathfrak{o}  & \mathfrak{p}^{-m} \\
  \mathfrak{p}^{n} & \mathfrak{o} 
\end{bmatrix}
\]
the Eichler order
of level $|m-n| = \# \ell - 1$
indexed by $\ell$, regarded as a
geodesic segment on the Bruhat--Tits tree (see
e.g. \cite{MR580949} or \cite{MR1954121} or
\cite[\S1.2]{MR2729264}).
Let $R_\ell^\times$ denote its
unit group and $K_{\ell} := R_\ell^\times \cap G < G$ the
intersection of that unit group with $G$.
Denote by $1_{K_{\ell}}$ the
characteristic function.

Let $\pi$ be a smooth representation of $G$.
For $f \in C_c^\infty(G)$,
denote by $\pi(f) \in \End(\pi)$
the operator
$\pi(f) v := \int_{g \in G} f(g)  \pi(g) v \, d g$.
For each segment $\ell$,
denote by $\pi[\ell] := \pi^{K_{\ell}}$ the subspace of vectors
fixed
by $K_\ell$.
The \emph{standard projector} onto
$\pi[\ell]$
is the averaging operator
$e_{\ell} := \pi(\vol(K_{\ell})^{-1} 1_{K_\ell}) \in \End(\pi)$;
it is an idempotent projector with image $\pi[\ell]$.
The \emph{standard complement} of $\pi[\ell]$ is the kernel
of $e_\ell$.
One has
\begin{equation}\label{eq:pi-ell-decomp}
  \pi = \pi[\ell] \oplus \ker(e_\ell).
\end{equation}
Note that $\ell \supseteq \ell'$ implies $\pi[\ell] \supseteq
\pi[\ell']$.
Set
$\pi[\ell]^{\flat} := \sum_{\ell' \subsetneq \ell}
\pi[\ell']$, with the sum taken over all proper subsegments
$\ell'$ of $\ell$.
The \emph{standard complement}
of $\pi[\ell]^{\flat}$ in $\pi[\ell]$
is
\[
\pi[\ell]^{\sharp} := \{v \in \pi[\ell] :
e_{\ell'} v = 0 \text{ for
  all } \ell' \subsetneq \ell\}.
\]
One has
\begin{equation}\label{eq:pi-ell-sharp-decomp}
  \pi[\ell] = \pi[\ell]^{\sharp}
\oplus \pi[\ell]^{\flat}.
\end{equation}
The \emph{standard projector} onto
$\pi[\ell]^{\sharp}$ is the projector
$\pi \rightarrow \pi[\ell]^{\sharp}$
afforded by the decompositions
\eqref{eq:pi-ell-decomp}
and \eqref{eq:pi-ell-sharp-decomp}.

If $\pi$ is unitary, then ``standard projector''
and ``standard complement'' have the same meanings as
``orthogonal projector'' and ``orthogonal complement''.
Indeed,
in that case the invariance under inversion of
the Haar measures on the compact groups $K_{\ell}$
implies that
the projectors $e_\ell$ and
$e_{\ell}^*$ are self-adjoint, hence define
orthogonal projections (since a projection is
orthogonal
if and only if it is self-adjoint).

\begin{theorem}\label{thm:main}
  Let $\ell = m..n$ be a segment with $\# \ell - 1 = |m-n| \geq 3$.
  Then
  $e_{\ell}^* := e_{m..n} - e_{m+1..n} - e_{m..n-1} +
  e_{m+1..n-1}$ is
  the standard
  projector onto $\pi[\ell]^{\sharp}$.
\end{theorem}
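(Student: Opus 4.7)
The plan is to decompose $\pi = \pi[\ell]^{\sharp} \oplus \pi[\ell]^{\flat} \oplus \ker(e_\ell)$ by combining \eqref{eq:pi-ell-decomp} and \eqref{eq:pi-ell-sharp-decomp}, and verify that $e_\ell^*$ acts as the identity on the first summand and annihilates the other two. Write $\ell_1 := m+1..n$, $\ell_2 := m..n-1$, and $\ell_{12} := m+1..n-1$. Since any proper subsegment of $\ell$ is contained in $\ell_1$ or $\ell_2$, one has $\pi[\ell]^\flat = \pi[\ell_1] + \pi[\ell_2]$.

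Two of the three checks are routine. On $\pi[\ell]^{\sharp}$, the defining condition gives $e_{\ell'} v = 0$ for every $\ell' \subsetneq \ell$ while $e_\ell v = v$, so $e_\ell^* v = v$. On $\ker(e_\ell)$, the inclusion $K_\ell \subseteq K_{\ell'}$ for any $\ell' \subsetneq \ell$ together with a coset decomposition $K_{\ell'} = \bigsqcup_i k_i K_\ell$ gives $e_{\ell'} = e_{\ell'} e_\ell$ as operators on $\pi$; hence every term of $e_\ell^* v$ vanishes.

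The heart of the matter is annihilation on $\pi[\ell_1] + \pi[\ell_2]$. For $v \in \pi[\ell_1]$, one has $e_\ell v = e_{\ell_1} v = v$, and $K_{\ell_1} \subseteq K_{\ell_{12}}$ yields $e_{\ell_{12}} e_{\ell_1} = e_{\ell_{12}}$, so the vanishing $e_\ell^* v = 0$ reduces to the operator identity
\[
  e_{\ell_2} e_{\ell_1} = e_{\ell_{12}}.
\]
By symmetry, the case $v \in \pi[\ell_2]$ requires $e_{\ell_1} e_{\ell_2} = e_{\ell_{12}}$. These two Hecke-algebra identities are the local counterpart of the identity $E_{13} \circ E_{02} = E_{12}$ from \S\ref{sec:basic-idea} and constitute the main content of the theorem.

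A direct convolution calculation, using the easy fact $K_{\ell_1} \cap K_{\ell_2} = K_\ell$, gives
\[
  1_{K_{\ell_1}} * 1_{K_{\ell_2}} = \vol(K_\ell) \cdot 1_{K_{\ell_1} K_{\ell_2}},
\]
so after normalization the identity $e_{\ell_1} e_{\ell_2} = e_{\ell_{12}}$ reduces to the set-theoretic factorization $K_{\ell_1} K_{\ell_2} = K_{\ell_{12}}$ (the companion $K_{\ell_2} K_{\ell_1} = K_{\ell_{12}}$ then follows by inversion, and the required volume compatibility $\vol(K_{\ell_1}) \vol(K_{\ell_2}) = \vol(K_\ell) \vol(K_{\ell_{12}})$ is automatic from the factorization combined with $K_{\ell_1} \cap K_{\ell_2} = K_\ell$). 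This factorization is the main obstacle and the local analogue of Lemma \ref{lem:basic-surjective}: given $\gamma \in K_{\ell_{12}}$, the natural approach is to conjugate by a suitable diagonal element and reduce modulo $\mathfrak{p}^{|m-n|}$, then peel off a lower-triangular factor in $K_{\ell_2}$ to leave an upper-triangular-shaped remainder in $K_{\ell_1}$. The hypothesis $|m-n| \geq 3$ provides precisely the room needed for both factors to satisfy their prescribed entry constraints; the remark after Lemma \ref{prop:level-p-cubed-projector} already indicates that the $|m-n| = 2$ analogue fails, so the hypothesis appears to be sharp.
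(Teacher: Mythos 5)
Your proposal is correct and takes essentially the same route as the paper's group-theoretic proof (\S\ref{sec-6}--\S\ref{sec-7}). Both reduce the theorem to the idempotent-composition identity $e_{\ell_2}\circ e_{\ell_1}=e_{\ell_{12}}$ (the instance of Lemma~\ref{lem:composition} needed here) and then to the set-theoretic factorization $K_{\ell_2}K_{\ell_1}=K_{\ell_{12}}$; your intermediate step differs in flavor --- you pass from operators to sets via the convolution identity $1_{K_{\ell_1}}\ast 1_{K_{\ell_2}}=\vol(K_\ell)\,1_{K_{\ell_1}K_{\ell_2}}$ together with the volume count $\vol(K_{\ell_1})\vol(K_{\ell_2})=\vol(K_\ell)\vol(K_{\ell_{12}})$, whereas the paper writes $e_{\ell_2}e_{\ell_1}v$ and $e_{\ell_{12}}v$, for $v\in\pi[\ell_1]$, as averages over the coset spaces $K_{\ell_2}/K_\ell$ and $K_{\ell_{12}}/K_{\ell_1}$ and exhibits a bijection --- but both routes land at the same bottleneck, and your volume-compatibility remark is correct. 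The one step you leave unproved is that factorization, which you rightly flag as the crux and as the local analogue of Lemma~\ref{lem:basic-surjective}. Note, however, that your sketch of it imports the conjugation and reduction modulo $\mathfrak{p}^{|m-n|}$ from the global computation in \S\ref{sec:basic-idea}; these are unnecessary in the local setting. As in \S\ref{sec-7}, one works directly over $\mathfrak{o}$: for $\gamma=\left(\begin{smallmatrix}a&b\\c&d\end{smallmatrix}\right)\in K_{\ell_{12}}$, the hypothesis $|m-n|\geq 3$ gives $bc\in\mathfrak{p}^{-(m+1)}\cdot\mathfrak{p}^{\,n-1}\subseteq\mathfrak{p}$, hence $a,d\in\mathfrak{o}^\times$, and one takes $\gamma_1=\left(\begin{smallmatrix}1&0\\c/a&1\end{smallmatrix}\right)\in K_{\ell_2}$ and $\gamma_2=\gamma_1^{-1}\gamma=\left(\begin{smallmatrix}a&b\\0&d-bc/a\end{smallmatrix}\right)\in K_{\ell_1}$. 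Filling in this elementary computation completes your proof.
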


\begin{remark}\label{rem:reduce-to-sl2}
  The general case of Theorem \ref{thm:main}
  reduces to the case $G = \SL_2(k)$
  by convolving against the characteristic function of
  $\begin{bmatrix}
    \det(G) &  \\
     & 1
  \end{bmatrix}$.
\end{remark}

\begin{remark}
  We have assumed in Theorem \ref{thm:main}
  neither that $\pi$ is irreducible nor generic,
  hence this result
  lies somewhat shallower
  than the fundamental results of local newvector theory \cite{MR0337789, Sch02}.
  For instance,
  in the case $G = \GL_2$, it does not depend upon the existence of the Kirillov model.
\end{remark}
\section{Interpretation for generic representations of $\GL_2$}
\label{sec-4}
We record here what Theorem \ref{thm:main} says when $G = \GL_2(k)$ and $\pi$ is a generic irreducible representation with unramified central character.
In that case,
local newvector theory \cite{MR0337789, Sch02} says that one may attach to $\pi$ a
nonnegative integer $c(\pi)$,
its \emph{log-conductor},
with the property that
$\pi[0..n] \neq 0$
if and only if $n \geq c(\pi)$;
one knows then moreover that
$\dim \pi[0..n] = \max(0,n + 1 - c(\pi))$
and that
$\pi[0..n]^\sharp = 0$ unless $n = c(\pi)$,
in which case
$\dim \pi[0..n]^\sharp = 1$.
Since the subgroups $K_\ell, K_{\ell'}$ are conjugate
whenever $\# \ell = \# \ell '$, it follows
more generally
for any segment $\ell$
that
$\dim \pi[\ell] = \max(0,\# \ell - c(\pi))$
and
\begin{equation}\label{eqn:dimension-formulas-for-gl2-fixed-vectors}
\dim \pi[\ell]^\sharp = 
\begin{cases}
  1 & \text{ if } \# \ell - 1 = c(\pi),
  \\
  0 & \text{ otherwise.}
\end{cases}
\end{equation}
Theorem \ref{thm:main} thus implies the following:
\begin{corollary}\label{cor:for-generics}
  For $G = \GL_2(k)$, $\pi$ as above, and $\ell$ satisfying the assumptions of Theorem \ref{thm:main},
  one has $e_\ell^* = 0$ unless $c(\pi) = \# \ell - 1$, in which
  case
  $e_{\ell}^*$ is the standard projector onto the
  one-dimensional space
  $\pi[\ell] = \pi[\ell]^\sharp$.
\end{corollary}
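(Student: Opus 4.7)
The plan is to combine Theorem~\ref{thm:main} with the dimension counts from local newvector theory summarized just above the statement. Theorem~\ref{thm:main} already identifies $e_\ell^*$ as the standard projector onto $\pi[\ell]^\sharp$, so all that remains is to pin down this subspace.

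First I would observe that the dimension formulas, usually stated for $\ell = 0..n$, transfer to a general segment $\ell = m..n$ via the isomorphism $\pi[0..n-m] \xrightarrow{\sim} \pi[\ell]$ induced by $\pi(\diag(\varpi^{-m}, 1))$; the identity $\diag(\varpi^{-m}, 1)\, K_{0..n-m}\, \diag(\varpi^{m}, 1) = K_{\ell}$ is a direct matrix check, and unramifiedness of the central character makes the presence of the center in $K_\ell$ harmless.

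Next I would split on the value of $c(\pi)$. If $c(\pi) \neq \#\ell - 1$, then \eqref{eqn:dimension-formulas-for-gl2-fixed-vectors} gives $\pi[\ell]^\sharp = 0$, and the standard projector onto the zero subspace is zero, so $e_\ell^* = 0$. If instead $c(\pi) = \#\ell - 1$, then $\dim \pi[\ell] = \#\ell - c(\pi) = 1$, while every proper subsegment $\ell' \subsetneq \ell$ satisfies $\#\ell' \leq c(\pi)$, so $\pi[\ell'] = 0$; hence $\pi[\ell]^\flat = 0$ and $\pi[\ell]^\sharp = \pi[\ell]$, which is one-dimensional. In this case $e_\ell^*$ is the standard projector onto this line, as claimed.

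There is no substantive obstacle here: granted Theorem~\ref{thm:main}, the corollary is a bookkeeping exercise built on the dimension formulas of local newvector theory. The only mild point that deserves attention is the reduction from a general segment $\ell = m..n$ to the familiar form $\ell = 0..n$, which is handled by the conjugation above.
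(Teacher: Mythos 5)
Your proof is correct and follows the same route the paper takes: transfer the dimension formulas of local newvector theory to a general segment by conjugating $K_{0..n-m}$ to $K_{m..n}$ via $\diag(\varpi^{-m},1)$, then read off from \eqref{eqn:dimension-formulas-for-gl2-fixed-vectors} that $\pi[\ell]^\sharp$ is zero unless $c(\pi)=\#\ell-1$, in which case it is the full one-dimensional space $\pi[\ell]$, and apply Theorem \ref{thm:main}. The paper records precisely this reasoning in the paragraph preceding the corollary and treats the deduction as immediate.
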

\begin{remark}
  If $\pi$ is irreducible and non-generic, then it is
  one-dimensional,
  and $e_{\ell}^* = 0$ under the assumptions of Theorem \ref{thm:main}.
\end{remark}
\begin{remark} The formulation and proof of Corollary \ref{cor:for-generics}
  extend with the usual modifications to representations having ramified central
  character:
Let $\pi$ be a generic irreducible representation
of $G = \GL_2(k)$
with central character $\omega : k^\times \rightarrow
\mathbb{C}^\times$.
Denote by $\chi : \mathfrak{o}^\times \rightarrow
\mathbb{C}^\times$
the restriction of $\omega$.
Let
$k := c(\chi)$
denote the log-conductor of $\chi$,
i.e., the smallest $k \in \mathbb{Z}_{\geq 0}$ for which
$\chi$ restricts trivially to $\mathfrak{o}^\times \cap 1 + \mathfrak{p}^k$.
Assume that $k \geq 1$, i.e., that $\chi$ is nontrivial, or
equivalently
that $\omega$ is ramified.
Let $\ell = m..n$ be a segment.
If $|m-n| \geq k$,
then $\chi$ induces a character
$K_{\ell} \ni \left(
  \begin{smallmatrix}
    a&b\\
    c&d
  \end{smallmatrix}
\right) \mapsto \chi(d)$;
by abuse of notation, we denote this character also
by $\chi: K_{\ell} \rightarrow \mathbb{C}^\times$.
Define $e_{\ell,\chi} := 0$ unless $|m-n| \geq k$,
in which case set $e_{\ell,\chi}  := \pi(\vol(K_{\ell})^{-1}
\chi^{-1} 1_{K_{\ell}})$.
Denote by $\pi[\ell,\chi]$ the image of $e_{\ell,\chi}$,
thus
$\pi[\ell,\chi] = \{v \in \pi : g v = \chi(g) v \text{ for all } g \in K_{\ell}\}$.
Define $\pi[\ell,\chi]^\sharp$ in terms of
$\pi[\ell,\chi]$ as in \S\ref{sec:main}
and
$e_{\ell,\chi}^*$ in terms of $e_{\ell,\chi}$ as in Theorem \ref{thm:main}.
Local newvector theory gives a formula analogous to \eqref{eqn:dimension-formulas-for-gl2-fixed-vectors}
for $\pi[\ell,\chi]^\sharp$.
The proof of Corollary \ref{cor:for-generics}
shows in this context that
$e_{\ell,\chi}^* = 0$ unless $c(\pi) = \# \ell - 1$,
in which case $e_{\ell,\chi}^*$ is the standard projector
onto the one-dimensional space $\pi[\ell,\chi] =
\pi[\ell,\chi]^\sharp$.
(Alternatively, one could reach the same conclusion
by
running the argument
with $K_{\ell}$ replaced by its subgroup
$\{\left(
  \begin{smallmatrix}
    \ast & \ast \\
    \ast & d
  \end{smallmatrix}
\right)
\in K_\ell : d \in 1 + \mathfrak{p}^k\}$.)
\end{remark}

\section{Deduction of Theorem \ref{cor:concrete} from Theorem \ref{thm:main}}
\label{sec-5}
Theorem \ref{cor:concrete} follows from Theorem \ref{thm:main}
via a standard ``adelic-to-classical'' argument (see
e.g. \cite{MR0379375}); for the sake of completeness and variety
of exposition, we record here a direct ``local-to-classical''  proof of this
implication.  Thus, let $q$ be cubefull.  For squarefree
integers $d,e$ dividing $q$, denote by $E_{d..d e}$ the
orthogonal projection from $\mathcal{A}(q)$ to the subspace
$\{\varphi|_d : \varphi \in \mathcal{A}(\tfrac{q}{d e})\}$.
As in \S\ref{sec:basic-idea}, Theorem \ref{cor:concrete}
amounts to the assertion that the operator
$E_{1..q}^* := \sum_{d,e|q} \mu(d) \mu(e) E_{d..d e}$ defines
the orthogonal projection onto $\mathcal{A}^*(q)$.  For the
same reasons as in the proof of Lemma
\ref{prop:level-p-cubed-projector}, $E_{1..q}^*$ acts by the
identity on the newspace, so it remains only to verify that it
annihilates the oldspace.  The oldspace is the sum over all
$p \mid q$ of the subspaces
\begin{equation}\label{eq:p-old-subspaces}
  \mathcal{A}(q/p) \text{ and }
  \{\varphi|_p : \varphi \in \mathcal{A}(q/p)\}
\end{equation}
so we reduce
to verifying for each such $p$ that $E_{1..q}^*$ annihilates
the spaces \eqref{eq:p-old-subspaces}.  To that
end, denote by $\pi$ the span of the functions $\varphi|\gamma$
taken over all $\varphi \in \mathcal{A}(q)$ and all $\gamma$ in the group $\Gamma := \SL_2(\mathbb{Q}) \cap
R_0(q)[1/p]$,
where $R_0(q) := \left[
  \begin{smallmatrix}
    \mathbb{Z} &\mathbb{Z} \\
    q \mathbb{Z} & \mathbb{Z} 
  \end{smallmatrix}
\right]\leq M_2(\mathbb{Z})$ is the order for which $\SL_2(\mathbb{Q}) \cap R_0(q) = \Gamma_0(q)$.
Regard $\Gamma$ as a subgroup of $G := \SL_2(\mathbb{Q}_p)$.
It is dense.
Each $\varphi \in \pi$ is invariant under some congruence subgroup of $\Gamma_0(q)$, hence under $\Gamma \cap U$ for some open subgroup
$U$ of $G$.
Consequently, the left action of $\Gamma$ on $\pi$
given by $\gamma \cdot \varphi := \varphi | \gamma^{-1}$
extends continuously to a smooth unitary representation
of $G$, which we continue to denote by $\pi$. 
Factor $q = q_0 p^n$ where $(q_0,p) = 1$, and denote by $\ell$
the segment $\ell := 0..n$.   The subspace $\mathcal{A}(q)$ is
recovered from $\pi$ as $\mathcal{A}(q) = \pi[\ell]$,
while $\pi[\ell]^\flat$
is the span of
\eqref{eq:p-old-subspaces}.
By the Chinese
remainder theorem, we may factor
\begin{equation}\label{eq:factor-projcetor}
  E_{1..q}^* = E_{1..q_0}^* \circ e_{\ell}^* |_{\mathcal{A}(q)}
\end{equation}
where
$E_{1..q_0}^*$ is defined analogously to $E_{1..q}^*$ and
$e_{\ell}^*$ is the operator on $\pi$ defined in
\S\ref{sec:main}.
By Theorem \ref{thm:main},
the subspaces \eqref{eq:p-old-subspaces}
are annihilated by
$e_{\ell}^*$,
hence (by \eqref{eq:factor-projcetor})
by $E_{1..q}^*$, as required.

\section{Reduction to an operator calculus for idempotents}
\label{sec-6}
We reduce here the proof of Theorem \ref{thm:main}
to that of the following:
\begin{lemma}\label{lem:composition}
  Let $\ell, \ell'$ be segments.
  Suppose that $\ell \subseteq \ell '$
  or $\ell \supseteq \ell '$ or
  $\# \ell \cap \ell ' \geq 2$.
  Then
  $e_{\ell} \circ e_{\ell'} = e_{\ell \cap \ell '}$.
\end{lemma}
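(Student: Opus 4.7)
The plan is to prove in one stroke that $e_\ell \circ e_{\ell'}$ always equals the averaging operator on the product set $K_\ell K_{\ell'}$ (whenever this set happens to be a subgroup), and then to verify the group-theoretic factorization
\begin{equation}\label{eq:subgroup-fact-plan}
  K_\ell K_{\ell'} = K_{\ell \cap \ell'}
\end{equation}
under each of the three hypotheses of the lemma.

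The first step is a short convolution calculation. One computes
\[
  (1_{K_\ell} * 1_{K_{\ell'}})(g) = \vol(K_\ell \cap g K_{\ell'}) = \vol(K_\ell \cap K_{\ell'}) \cdot 1_{K_\ell K_{\ell'}}(g),
\]
the second equality following by translation. When $H := K_\ell K_{\ell'}$ is a subgroup, the identity $\vol(H) \vol(K_\ell \cap K_{\ell'}) = \vol(K_\ell) \vol(K_{\ell'})$ (from the fiber-counting map $K_\ell \times K_{\ell'} \to H$) converts the right-hand side into $\vol(H)^{-1} 1_H$, so $e_\ell \circ e_{\ell'} = \pi(\vol(H)^{-1} 1_H)$ is the standard projector onto $\pi^H$.

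The two nested cases of \eqref{eq:subgroup-fact-plan} are now immediate. Longer segments correspond to smaller congruence subgroups, so $\ell \subseteq \ell'$ gives $K_{\ell'} \subseteq K_\ell$, whence $K_\ell K_{\ell'} = K_\ell = K_{\ell \cap \ell'}$; the case $\ell \supseteq \ell'$ is symmetric.

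The substantive case, and the main obstacle, is \eqref{eq:subgroup-fact-plan} when $\#\ell \cap \ell' \geq 2$ with neither segment contained in the other. After possibly interchanging $\ell$ and $\ell'$, I may write $\ell = m..n$ and $\ell' = m'..n'$ with $m < m'$ and $n < n'$, so that $\ell \cap \ell' = m'..n$ and the hypothesis $\#\ell \cap \ell' \geq 2$ becomes $n - m' \geq 1$. The inclusion $K_\ell K_{\ell'} \subseteq K_{\ell \cap \ell'}$ is immediate, since both factors already lie in $K_{\ell \cap \ell'}$. For the reverse inclusion, given $g = \left(\begin{smallmatrix} a & b \\ c & d \end{smallmatrix}\right) \in K_{\ell \cap \ell'}$ with $b \in \mathfrak{p}^{-m'}$ and $c \in \mathfrak{p}^n$, I would generalize the LU-factorization used in the proof of Lemma \ref{lem:basic-surjective}: the estimate $bc \in \mathfrak{p}^{n-m'} \subseteq \mathfrak{p}$, which uses $n - m' \geq 1$ critically, yields $ad = \det(g) + bc \in \mathfrak{o}^\times$ and thus $a, d \in \mathfrak{o}^\times$, and then the explicit identity
\[
  g = \begin{pmatrix} 1 & 0 \\ c/a & 1 \end{pmatrix} \begin{pmatrix} a & b \\ 0 & d - bc/a \end{pmatrix}
\]
writes $g = g_1 g_2$ with $g_1 \in K_\ell$ (lower-left $c/a \in \mathfrak{p}^n$) and $g_2 \in K_{\ell'}$ (lower-left $0 \in \mathfrak{p}^{n'}$, and diagonal $a, d - bc/a \in \mathfrak{o}^\times$ since $bc/a \in \mathfrak{p}$). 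Membership in $G$ is ensured by $\SL_2(k) \subseteq G$ together with $g_2 = g g_1^{-1}$. The fact that $n - m' \geq 1$ is used essentially here explains the threshold $\#\ell \cap \ell' \geq 2$ in the hypothesis.
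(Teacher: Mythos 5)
Your proof is correct and it takes a genuinely different route from the paper's group-theoretic argument (\S7), though the two share the same technical heart.

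The paper reduces the key case to showing that for $v$ fixed by $K_{m'..n'}$ one has $e_{m..n}\,v = e_{m'..n}\,v$, writes both sides as finite coset averages, and then proves the induced map $K_{m..n}/K_{m..n'} \to K_{m'..n}/K_{m'..n'}$ is bijective (injectivity from $K_{m..n}\cap K_{m'..n'}=K_{m..n'}$, surjectivity from $K_{m..n}\, K_{m'..n'}=K_{m'..n}$). You instead compute the convolution $1_{K_\ell}*1_{K_{\ell'}}$ directly as a function on $G$, independently of $\pi$, showing it is proportional to $1_{K_\ell K_{\ell'}}$ with the right normalization whenever $K_\ell K_{\ell'}$ is a subgroup, and then verify $K_\ell K_{\ell'} = K_{\ell\cap\ell'}$. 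This buys uniformity: the nested and overlapping cases are subsumed under one convolution identity, and you bypass the injectivity check $K_{m..n}\cap K_{m'..n'}=K_{m..n'}$ entirely, replacing it with the volume count $\vol(H)\vol(K_\ell\cap K_{\ell'})=\vol(K_\ell)\vol(K_{\ell'})$ — which is standard once $H=K_\ell K_{\ell'}$ is a group. The substantive step is the same LU-factorization that both you and the paper perform, with the same use of $bc\in\mathfrak{p}$ (requiring $\#\ell\cap\ell'\geq 2$) to get $a,d\in\mathfrak{o}^\times$. Two small remarks: first, you wrote $g_2 = g g_1^{-1}$ but it should be $g_2 = g_1^{-1}g$ (since $g = g_1 g_2$) — inconsequential. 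Second, the reduction ``after possibly interchanging $\ell$ and $\ell'$'' should be justified; in your framework this is automatic because once the relabeled identity $K_{\ell'}K_\ell = K_{\ell'\cap\ell}$ is proved, taking inverses (the right-hand side being a group) yields $K_\ell K_{\ell'} = K_{\ell\cap\ell'}$ as well — the paper performs the same reduction with the same brevity.
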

\begin{remark}
  By analogy to composition formulas arising in
  microlocal analysis,
  it may be instructive to
  think of the characteristic function $1_\ell$ of
  $\ell$ as a symbol, $e_\ell$ as its quantization,
  and the conclusion of Lemma \ref{lem:composition}
  as the assertion that the composition of the quantization
  of two such symbols $1_{\ell}, 1_{\ell'}$
  is the quantization of their product
  $1_\ell 1_{\ell'} = 1_{\ell \cap \ell'}$ in nice enough cases.

\end{remark}
\begin{remark}
  Suppose $G = \GL_2(k)$.
  The identity
  $e_{\ell} \circ e_{\ell'} = e_{\ell \cap \ell'}$
  fails in general if $\# \ell \cap \ell ' < 2$ and neither segment contains the other, but continues to hold if $\pi$ is irreducible with unramified central character and log-conductor $c(\pi) \geq 2$, the point being that in such cases, one can simultaneously diagonalize the operators $e_\ell$ by a basis of characteristic functions of $\mathfrak{o}^\times$-cosets in the Kirillov model, corresponding classically to the Fourier coefficients of newforms of conductor divisible by $p^2$ being supported on integers coprime to $p$; see \S\ref{sec:9}.
\end{remark}

Assuming Lemma \ref{lem:composition}
for the moment, we now deduce Theorem \ref{thm:main} by an argument similar to that in the proof of Lemma \ref{prop:level-p-cubed-projector}.
Take $\ell = m..n$ with $|m-n| \geq 3$;
we must show that $e_\ell^*$ is
the standard projector onto $\pi[\ell]^\sharp$.
By the identity
$e_{\ell}^* = e_{\ell}^* \circ e_\ell$
and the definition of $\pi[\ell]^\sharp$,
we see that $e_{\ell}^*$ annihilates the standard complement
of $\pi[\ell]$ and restricts to the identity on
$\pi[\ell]^\sharp$,
so our main task is to show that it annihilates
$\pi[\ell]^\flat$,
or equivalently, that
\begin{equation}\label{eq:comp-kills-oldforms-goal}
  e_\ell^* \circ e_{\ell'} = 0
\end{equation}
for all $\ell ' \subsetneq \ell$.
Since $e_{\ell'} = e_{\ell''} \circ e_{\ell'}$
for any $\ell'' \supseteq \ell'$,
it suffices to establish
\eqref{eq:comp-kills-oldforms-goal}
when $\ell'$ is a \emph{maximal} proper subsegment $\ell'
\subsetneq \ell$.
We verify this when
$\ell' = m+1..n$; the case $\ell' = m..n-1$ is similar.
Our assumption on $|m-n|$ implies that
for each segment
$\ell'' \in \{m..n, m+1..n, m..n-1, m+1..n-1\}$ arising
in the definition of $e_{\ell}^*$,
one has $\# \ell' \cap \ell'' \geq 2$,
so by Lemma \ref{lem:composition},
we have
\[
e_{\ell}^* \circ e_{\ell'}
=
e_{m..n \cap \ell'}
-
e_{m+1..n \cap \ell'}
-
e_{m+1..n-1 \cap \ell'}
-
e_{m+1..n-1 \cap \ell'} \] which simplifies to $e_{m+1..n}
-
e_{m+1..n}
-
e_{m+1..n-1}
-
e_{m+1..n-1}
=0$,
as required.

\section{Group-theoretic proof}
\label{sec-7}
We record here a proof of Lemma \ref{lem:composition}
similar to that of Lemma \ref{prop:level-p-cubed-projector}.
The case in which one of $\ell, \ell '$ contains the other
follows
from the transitivity of standard projectors onto nested
subspaces,
so we need only consider the case that
$\# \ell \cap \ell' \geq 2$
and neither contains the other.
By a symmetry argument,
we reduce further to showing that
\begin{equation}\label{eqn:goal-with-explicit-notation}
  e_{m..n} \circ e_{m'..n'} = e_{m'..n}
  \text{ whenever }
  m < m' < n < n'.
\end{equation}
Note especially
that \eqref{eqn:goal-with-explicit-notation}
implies $|m' - n| \geq 1$.
Since
$e_{m'..n} = e_{m'..n} \circ e_{m'..n'}$,
we reduce
to verifying that
$e_{m..n} v = e_{m'..n} v$ for all $v \in \pi[m'..n']$.
For such $v$, one has
\[
e_{m..n} v = \frac{1}{|K_{m..n}/K_{m..n'}|}
\sum_{\gamma \in K_{m..n}/K_{m..n'}} \pi(\gamma) v,
\]
\[
e_{m'..n} v = \frac{1}{|K_{m'..n}/K_{m'..n'}|}
\sum_{\gamma \in K_{m'..n}/K_{m'..n'}} \pi(\gamma) v.
\]
so our task reduces to verifying that the natural
map of coset spaces
$K_{m..n}/K_{m..n'} \rightarrow K_{m'..n}/K_{m'..n'}$
induced by the inclusions
$K_{m..n} \leq K_{m'..n}$, $K_{m..n'} \leq K_{m'..n'}$
is bijective.
The injectivity follows from 
$K_{m..n} \cap K_{m'..n'} = K_{m..n'}$,
the surjectivity from the claim
$K_{m..n} \cdot  K_{m'..n'} = K_{m'..n}$
for whose proof we must verify that any
\[
\gamma = \begin{pmatrix}
  a & b \\
  c & d
\end{pmatrix}
\in \begin{bmatrix}
  \mathfrak{o}  & \mathfrak{p}^{-m'} \\
  \mathfrak{p}^n & \mathfrak{o} 
\end{bmatrix}^\times \cap G
\]
arises as $\gamma = \gamma_1 \gamma_2$ for some
\[
\gamma_1 \in \begin{bmatrix}
  \mathfrak{o}  &  \mathfrak{p}^{-m} \\
  \mathfrak{p}^n & \mathfrak{o} 
\end{bmatrix}^\times \cap G,
\quad 
\gamma_2 \in \begin{bmatrix}
  \mathfrak{o}  &  \mathfrak{p}^{-m'} \\
  \mathfrak{p}^{n'} & \mathfrak{o} 
\end{bmatrix}^\times \cap G.
\]
From $\det(\gamma) \in \mathfrak{o}^\times$, $b \in
\mathfrak{p}^{-m'}, c \in \mathfrak{p}^n$,
$n > m'$ we obtain $b c \in \mathfrak{p}$ and hence $a, d \in \mathfrak{o}^\times$,
justifying the choice
\[
\gamma_1 :=
\begin{pmatrix}
  1 &  \\
  c/a & 1
\end{pmatrix},
\quad 
\gamma_2 :=
\begin{pmatrix}
  a & b \\
  0 & d - b c / a
\end{pmatrix}
\]
for which $\gamma_1 \in \SL_2(k) < G$
and $\gamma_2 = \gamma_1^{-1} \gamma \in G$.
The required congruences are clear.
This completes the proof.

\section{Probabilistic proof}
\label{sec-8}
We record
here an alternative proof of the key identity
\eqref{eqn:goal-with-explicit-notation}.
For notational simplicity
we suppose that $G = \SL_2(k)$ (cf. Remark \ref{rem:reduce-to-sl2}).
Fix an arbitrary linear functional $v^* : \pi \rightarrow \mathbb{C}$.
It suffices to show that
\begin{equation}\label{eq:goal-after-linear-functional}
  v^*(e_{m..n} e_{m'..n'} v) = v^*(e_{m'..n} v) \text{ for }
  v \in \pi[m..n']
\end{equation}
under the assumptions \eqref{eqn:goal-with-explicit-notation} on the
indices.
Denote by $X$ the $(q+1)$-regular tree, where $q := \# \mathfrak{o}/\mathfrak{p}$,
and by $X^{m..n'}$ the set of non-backtracking paths
$x = (x_m \rightarrow x_{m+1} \rightarrow \dotsb \rightarrow
x_{n'})$ on $X$.
There is a well-known injection
$G/K_{m..n'} \rightarrow X^{m..n'}$
obtained by identifying $X$ with the set of homothety classes $[L]$ of
lattices
$L \subseteq k^2$ and mapping the coset $g K_{m..n'}$ to
the tuple of lattice classes
$([g L_m], [g L_{m+1}], \dotsc, [g L_{n'}])$ where
$L_j := \mathfrak{p}^{-j} \times \mathfrak{o}$ has stabilizer $K_{j..j}$.
Denote by $X_{m..n'}$ the image of $G/K_{m..n'}$ in $X^{m..n'}$
and by $G \ni g \mapsto [g] \in X_{m..n'}$ the induced surjection.
Consider the map
$\phi : \pi[m..n'] \rightarrow \mathbb{C}^{X_{m..n'}}$
from $\pi$ to the space of complex-valued functions on
$X_{m..n'}$
that sends a vector $v \in \pi[m..n']$
to the function $\phi(v) : X_{m..n'} \rightarrow \mathbb{C}$
given by
\[
\phi(v)([g]) := v^*(\pi(g) v),
\]
which is well-defined.
Given
$x, y \in X^{m..n'}$
and a subsegment $\ell = p..p' \subseteq m..n'$,
write $x|_{\ell} = y|_{\ell}$ to denote that
the subpaths $(x_p \rightarrow  \dotsb
\rightarrow x_{p'})$,
$(y_p \rightarrow  \dotsb \rightarrow
y_{p'})$
coincide.
The subset $X_{m..n'}$ of $X^{m..n'}$ has the property that if
$x \in X_{m..n'}$
and $y \in X^{m..n'}$ satisfy $x|_\ell = y|_\ell$ for some
subsegment $\ell \subseteq m..n'$,
then $y \in X_{m..n'}$.
For $f : X_{m..n'} \rightarrow \mathbb{C}$
and a subsegment $\ell \subseteq m..n'$,
denote by
$\rho_{\ell} f: X_{m..n'} \rightarrow \mathbb{C}$
the function given by the expectation
$\rho_{\ell} f(x) = \mathbb{E} f(y)$
taken over $y = (y_m \rightarrow y_{m+1} \rightarrow \dotsb y_{n'})$
chosen uniformly at random
from the finite set of non-backtracking paths for which $x|_{\ell} = y|_{\ell}$.
One verifies directly from the definitions that
\begin{equation} \phi(e_{\ell} v) = \rho_{\ell} \phi(v), \end{equation}
so to establish \eqref{eq:goal-after-linear-functional},
our task reduces to showing for all $f : X_{m..n'} \rightarrow
\mathbb{C}$ that
\begin{equation}\label{eq:goal-paths}
  \rho_{m..n} \rho_{m'..n'} f = \rho_{m'..n} f.
\end{equation}
It suffices to test this equality on the characteristic function
$f := 1_x$ of an arbitrary non-backtracking path $x = (x_m \rightarrow \dotsb \rightarrow
x_{n'}) \in X_{m..n'}$.
The RHS of \eqref{eq:goal-paths} is then the uniform distribution
on the finite set of non-backtracking paths
$y = (y_m \rightarrow \dotsb \rightarrow y_{n'})$
for which $y|_{m'..n} = x|_{m'..n}$,
and so \eqref{eq:goal-paths} follows from:
\begin{lemma}
  Suppose $m \leq m' < n \leq n'$.
  Then the following probability distributions on $X_{m..n'}$ coincide:
  \begin{itemize}
  \item The uniform distribution $\rho_{m'..n} 1_x$ on the finite set
    of non-backtracking paths
    $y = (y_m \rightarrow \dotsb \rightarrow y_{n'})$ for which
    $y|_{m'..n} = x|_{m'..n}$.
  \item The distribution $\rho_{m..n} \rho_{m'..n} 1_x$ generated iteratively as follows:
    \begin{enumerate}
    \item Start with the deterministic subpath $(y_{m'} \rightarrow \dotsb
      \rightarrow y_n) := (x_{m'} \rightarrow \dotsb x_n)$.
    \item Choose uniformly at random a forward extension
      $(y_n \rightarrow \dotsb \rightarrow y_{n'})$
      satisfying the non-backtracking condition
      that $(y_n \rightarrow
      y_{n+1})$ not be the inverse of $(x_n \rightarrow x_{n-1})$.
    \item Independently choose uniformly at random a backward extension
      $(y_{m} \rightarrow \dotsb \rightarrow y_{m'})$
      for which $(y_{m'} \rightarrow
      y_{m' - 1})$ is not the inverse of $(x_{m'} \rightarrow x_{m'+ 1})$.
    \end{enumerate}
  \end{itemize}
\end{lemma}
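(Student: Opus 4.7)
The plan is to show that both distributions are uniform on the common finite set
\[
S := \{ y \in X_{m..n'} : y|_{m'..n} = x|_{m'..n}\},
\]
with the same normalization, via three steps: parametrize $S$, count it, and match the probability weights.

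First, any $y \in S$ is uniquely specified by its backward tail $(y_{m'} \rightarrow y_{m'-1} \rightarrow \dotsb \rightarrow y_m)$ together with its forward tail $(y_n \rightarrow y_{n+1} \rightarrow \dotsb \rightarrow y_{n'})$, the middle segment $(y_{m'} \rightarrow \dotsb \rightarrow y_n)$ being forced to equal $(x_{m'} \rightarrow \dotsb \rightarrow x_n)$. The non-backtracking condition for $y$ decomposes into the (automatic) non-backtracking of the middle, the non-backtracking of each tail, and the two junction conditions $y_{n+1} \neq x_{n-1}$ and $y_{m'-1} \neq x_{m'+1}$. Since each junction condition involves only the fixed middle and the first step of the adjacent tail, the two tails are completely decoupled. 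The stability property of $X_{m..n'}$ noted earlier, applied to the subsegment $m'..n$ (nonempty by the hypothesis $m' < n$), guarantees that every such $y$ automatically lies in $X_{m..n'}$ and not merely in $X^{m..n'}$.

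Second, I would count on the $(q+1)$-regular tree $X$. The first step of a forward tail from $x_n$ may be any of the $q$ neighbors of $x_n$ other than $x_{n-1}$, and each of the $n'-n-1$ subsequent non-backtracking steps has $q$ admissible successors, giving exactly $q^{n'-n}$ forward tails (with the empty tail counted as $1$ when $n = n'$). Symmetrically there are $q^{m'-m}$ admissible backward tails, so $|S| = q^{(n'-n)+(m'-m)}$.

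Finally, I would match the weights: the first distribution assigns each $y \in S$ probability $1/|S| = q^{-(n'-n)-(m'-m)}$ by definition, while the second, sampling the two tails independently and uniformly, assigns each $y$ probability $q^{-(n'-n)} \cdot q^{-(m'-m)}$, which agrees. There is no serious obstacle to this approach; the essential content is simply the factorization of the non-backtracking constraint at the two junctions and the clean power-of-$q$ count of non-backtracking paths on a regular tree.
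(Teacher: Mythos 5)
Your argument is correct and takes essentially the same route as the paper's one-sentence proof, which simply observes that the hypothesis $n > m'$ forces the middle subpath $(x_{m'} \rightarrow \dotsb \rightarrow x_n)$ to contain both edges $(x_{m'} \rightarrow x_{m'+1})$ and $(x_{n-1} \rightarrow x_n)$, so the non-backtracking junction conditions on the two tails depend only on the fixed middle and are therefore mutually independent. You flesh out that same observation by parametrizing the common support as a product $(\text{backward tails}) \times (\text{forward tails})$, counting each factor as a power of $q$ via $(q+1)$-regularity, and matching $1/|S|$ against the product weight; the mathematical content is identical, just made more explicit.
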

\begin{proof}
  The assumption $n > m'$ implies that the subpath
  $(x_{m'} \rightarrow \dotsb \rightarrow x_n)$ contains the (possibly identical) edges
  $(x_{m'} \rightarrow x_{m'+1})$
  and $(x_{n-1} \rightarrow x_n)$,
  so the non-backtracking condition
  on the forward path
  $(y_n \rightarrow \dotsb \rightarrow y_{n'})$
  is independent of that on the backward path
  $(y_m \rightarrow \dotsb \rightarrow y_{m'})$.
\end{proof}

\section{A third proof}\label{sec:9}
We sketch here a proof of Corollary \ref{cor:for-generics} (thus
$G = \GL_2(k)$) which is more complicated and less
self-contained than the above proofs, but which some readers may
find illustrative; it is also similar in spirit to what is
typically done classically along the lines discussed in
\S\ref{sec:intro}.  Thus, let notation and assumptions be as in the
statement of Corollary \ref{cor:for-generics}.
We assume, for clarity of exposition,
that $\pi$ is unitary; a proof for general $\pi$ may be obtained by
working systematically with standard projectors in place of
inner products.
We may assume that $\pi$ has unramified central character,
as otherwise both sides of the required identity vanish.
Let us realize $\pi$ in
its Kirillov model with respect to an unramified additive
character (see \cite{Sch02}).

Suppose first that
$c(\pi) \geq 2$, so that $\pi$ is supercuspidal.  Then
$\pi[0..c(\pi)]$ is spanned by the
standard newvector $1_{\mathfrak{o}^\times}$;
more generally,
for any segment $\ell$, one has the decomposition
\[
\pi[\ell] = \oplus _{\substack{
    n \in \mathbb{Z}  : \\
    n..n + c(\pi) \subseteq \ell
  }
}
\mathbb{C}
1_{\varpi^n \mathfrak{o}^\times}
\]
with respect to which the orthogonal projectors
$e_{\ell'}$ for $\ell' \subseteq \ell$
are given by projection onto
the summands
with
$n..n + c(\pi) \subseteq \ell'$.
(Indeed, by the conjugacy of the $K_{\ell}$ for $\ell$ of given
length,
we may assume that $\ell = 0..m$ for some $m \in
\mathbb{Z}_{\geq 0}$;
in that case,
the required conclusion
follows from the proof of the ``absolutely cuspidal'' case of \cite[Thm
1]{MR0337789},
see especially p303-304.)
Lemma \ref{lem:composition}, and hence Corollary
\ref{cor:for-generics}, follows immediately from this
description, even without the assumption
$\# \ell \cap \ell' \geq 2$.

It remains to consider the case
that $c(\pi) \in \{0,1\}$.  We explain the proof when
$c(\pi) = 0$, the case $c(\pi) = 1$ being similar but simpler.
To simplify further, we shall prove the conclusion
$e_{\ell \cap \ell'} = e_{\ell} \circ e_{\ell'}$ of Lemma
\ref{lem:composition} only in the special case
$\ell = 0..2, \ell' = 1..3$ as in \S\ref{sec:basic-idea}; the general case differs
only notationally.  Let $v_0$ be a unit vector in the
one-dimensional space $\pi[0..0]$.  Set
$v_n := \pi(a(\varpi^n)) v_0$, where $a(y) := \diag(y,1)$.  Then
$v_n$ spans $\pi[n..n]$.  Moreover, $v_0,v_1,v_2,v_3$ give a
linear (non-orthogonal) basis for $\pi[0..3]$.  Our task is to verify
that $e_{0..2} e_{1..3} v_i = e_{1..2} v_i$ for
$i=0,1,2,3$.
For $i=1,2$, the vector $v_i$
is preserved
under the indicated idempotents, and the required identity
follows.
It remains to consider the cases $i=0,3$;
they are similar to one another,
so we consider only the case $i=3$.
Since $v_3 = e_{1..3} v_3$,
our task is to show that
\begin{equation}\label{eq:goal-kirillov-computational-approach}
  e_{0..2} v_3 = e_{1..2} v_3.
\end{equation}
For an integer $n$, set
$a_n := \langle v_n, v_0 \rangle$.
Then $a_0 = 1$, $a_{-n} = a_n$ and $\langle v_{m+n}, v_m \rangle = a_n$ for all $m$.
The quantity $a_1$ is the Hecke eigenvalue
normalized so that
$|a_1| \leq 2 / (q^{1/2} + q^{-1/2})$
holds and is sharp for tempered unitary representations.
Since $\pi$ is generic, it is not one-dimensional,
and so $a_1 \neq \pm 1$;
by the determinant test, there exist solutions $b_1, b_2$
to the system of equations
\begin{equation}\label{eq:linear-1}
  a_1 = b_1 a_0 + b_0 a_{-1},
\end{equation}
\begin{equation}\label{eq:linear-2}
  a_2 = b_1 a_1 + b_0 a_0.
\end{equation}
Set $w := b_1 v_2 + b_0 v_1$.
We see from
\eqref{eq:linear-1} that $v_3 - w$ is orthogonal to $v_2$ and
from \eqref{eq:linear-2} that is it orthogonal to $v_1$.  The
$a_n$ are known
(e.g., by the difference equation
for the spherical Whittaker function or the
recurrence relation for the Hecke eigenvalues)
to satisfy a second order linear
difference equation with constants coefficients, so from \eqref{eq:linear-1}
and \eqref{eq:linear-2} we deduce that
\begin{equation}\label{eq:linear-3}
  a_{n+2} = a_{n+1} b_1 + a_n b_0
\end{equation}
for all $n$.
In particular, \eqref{eq:linear-3} holds with $n = 1$.
It follows that $v_3 - w$ is orthogonal also to $v_0$.
Hence $w$ is the orthogonal projection of $v_3$
both to $\langle v_1, v_2 \rangle = \pi[1..2]$
and
to $\langle v_0, v_1, v_2 \rangle = \pi[0..2]$,
giving
the required identity \eqref{eq:goal-kirillov-computational-approach}.

\subsection*{Acknowledgements}
We thank A. Saha and S. Wachter for helpful feedback on an earlier draft
and E. Kowalski for encouragement.
We thank the referee for several helpful comments
which have improved the presentation and clarity of this paper.

\bibliography{refs}{}
\bibliographystyle{plain}
\end{document}